\numberwithin{equation}{section}
\newtheorem{theorem}{Theorem}[section]
\newtheorem{lemma}[theorem]{Lemma}
\newtheorem{proposition}[theorem]{Proposition}
\newtheorem{corollary}[theorem]{Corollary}
\theoremstyle{definition}
\newtheorem{definition}[theorem]{Definition} 
\newtheorem{remark}[theorem]{Remark}
\newtheorem{example}[theorem]{Example}
\newcommand{\lr}[1]{\left\langle#1\right\rangle}
\newcommand{\K}[0]{\mathbb{K}}
\DeclareMathOperator{\reg}{reg}
\DeclareMathOperator{\pdim}{pdim}
\DeclareMathOperator{\hte}{ht}
\title{The Graded Betti Numbers of the Skeletons of Simplicial Complexes}
\date{
}
\author{Mohammed Rafiq Namiq}
\address{Mohammed Rafiq Namiq, Department of Mathematics, College of Science, University of Sulaimani, Kurdistan Region, Iraq.}
\email{mohammed.namiq@univsul.edu.iq}
\subjclass[2020]{Primary 13D02, 13F55; Secondary 13A02, 55U05.}
\keywords{Simplicial complex, skeleton, degree resolution, projective dimension, regularity, Betti numbers}
\begin{document}
	
\begingroup
\def\uppercasenonmath#1{} 
\let\MakeUppercase\relax 
\maketitle
\endgroup

\begin{abstract}
	In this paper, we study a class $\mathcal{C}$ of squarefree monomial ideals $I\subseteq R=\K[x_1,\dots,x_n]$ over a field $\K$, defined by the condition that $\dim R/I$ equals the maximum degree of the minimal generators of $I$ minus one. We show that the Stanley-Reisner ideal of every $i$-skeleton of a simplicial complex $\Delta$ belongs to $\mathcal{C}$ for all $-1\le i<\dim\Delta$. To investigate their homological properties, we introduce the notion of a degree resolution and prove that every ideal in $\mathcal{C}$  possesses this property. Moreover, we show that every squarefree monomial ideal admits a truncation whose regularity coincides with that of the original ideal, thereby reducing the study of degree resolutions to that of linear resolutions. Finally, we provide an explicit formula describing the relationship between the graded Betti numbers of a simplicial complex and those of its skeletons.

\end{abstract}

\section{Introduction}
Let $I$ be a squarefree monomial ideal in a polynomial ring $R=\K[x_1,\dots, x_n]$ over a field $\K$. For every squarefree monomial ideal $I\subset R$, there exists an associated simplicial complex $\Delta$ such that the quotient ring $\K[\Delta]=R/I$ is the Stanley-Reisner ring \cite{Stanley1975}. This ring encodes important combinatorial, topological and algebraic properties of the simplicial complex $\Delta$. For example, the combinatorial structure of $\Delta$ is directly encoded in the generators of the ideal $I$, which correspond to the minimal non-faces of $\Delta$. Moreover, the homological invariants of $\Delta$, such as the Betti numbers and homology groups, can be studied through the algebraic properties of the Stanley-Reisner ring $\K[\Delta]$. A minimal free resolution of $\K[\Delta]$ is of the form:
\begin{equation*}
	0\longrightarrow\bigoplus_{j}R(-j)^{\beta_{p, j}}\longrightarrow\cdots\longrightarrow\bigoplus_{j}R(-j)^{\beta_{2, j}}\longrightarrow\bigoplus_{j}R(-j)^{\beta_{1, j}}\longrightarrow R\longrightarrow \K[\Delta]\longrightarrow0.
\end{equation*}
That is, a minimal free resolution of $\K[\Delta]$ is an exact sequence of finitely generated free $R$-modules, each powers to a graded Betti number $\beta_{i,j}$. A graded Betti number $\beta_{i,j}$ counts the number of generators of degree $j$ in the $i$-th syzygy module of $\K[\Delta]$ in the minimal free resolution \cite{Peeva2011}. 

These Betti numbers are useful for understanding the algebraic complexity of $\Delta$ and play an important role in combinatorial commutative algebra and topological studies of simplicial complexes. Determining the graded Betti numbers of a minimal free resolution of a simplicial complex is often a highly challenging task, and in many cases, calculating these numbers explicitly can be extremely difficult, especially for ideals generated by many monomials.

Despite the difficulty in determining the graded Betti numbers of a minimal free resolution of a squarefree monomial ideal of $R$, another challenge arises in understanding how these graded Betti numbers change under certain operations on 
$I$. In particular, for a given integer $k\ge0$, this includes studying the graded Betti numbers of the following related ideals:
\begin{enumerate}
	\item The $k$-th power of $I$, denoted by $I^k$.
	\item The ideal generated by the (squarefree) monomials in $I$ whose degree is at least $k$, denoted by ($I_k$) $I_{\ge k}$.
	\item The Stanley-Reisner ideal corresponding to the $k$-skeleton of the simplicial complex $\Delta$, denoted by $I_{\Delta^k}$.
\end{enumerate}
The first case has been studied by many researchers for certain classes of ideals, such as in \cite{Banerjee2015,BeyarslanHaTrung2015,CutkoskyHerzogTrung1999, HerzogHibiZheng20042,JayanthanSelvaraja2021,NevoPeeva2013,Romer2001}. However, there is currently no general formula for computing the graded Betti numbers of $I^k$.  These numbers appear to lack pattern. For example, the ideal $I=(acf,ade,bcd,bef,cdf,cde,cef,def)$, as presented by Sturmfels in \cite{Sturmfels2000}, has a linear resolution, but $I^2$ does not have a linear resolution.

The second case was studied by Ahmed, Fröberg, and Namiq \cite{AhmedFrobergNamiq2023}, who provided an explicit formula for computing the graded Betti numbers of $I_k$ and $I_{\ge k}$ in terms of those of $I$. This result is particularly significant because $\reg I=k$ if and only if $k$ is the smallest integer such that $I_{\ge k}$ $(I_k)$ has a linear resolution. Furthermore, once $I_{\ge k}$ $(I_k)$ admits a linear resolution, the ideals $I_{\ge\ell}$ $(I_\ell)$ also have linear resolutions for all $\ell\geq k$. We note that $I^k\subseteq I_{\ge s}$ for all $s\le k\alpha(I)$, where $\alpha(I)$ is the minimum degree of the minimal generators of $I$.

In this paper, our goal is to study the third case: the relationship between the graded Betti numbers of a full simplicial complex and those of its skeletons. Specifically, we aim to determine whether the graded Betti numbers of a skeleton can provide meaningful insights into the graded Betti numbers of the original complex and vice versa. To this end, we provide both necessary and sufficient conditions, along with formulas for computing the graded Betti numbers in both directions.





This paper is organised as follows: We begin by reviewing the foundational concepts of simplicial complexes. We then introduce the notion of a degree resolution (see Definition \ref{degree resolution}), which occurs when the regularity of $I$ equals the degree of $I$. Here, the degree of $I$, denoted by $\omega(I)$, is the maximum degree among the minimal generators of $I$. Then we establish a relation between the dimension $\dim R/I$ and the degree $\omega(I)$. Specifically, in Lemma \ref{dimension degree}, we show that 
$$\dim R/I\ge\omega(I)-1.$$
Equality holds if and only if the ideal $I$ has a degree resolution and $\reg R/I=\dim R/I$. This result establishes a relation between the combinatorial properties of the simplicial complex and the algebraic properties of the ideal. As a consequence of this result, we show that $I_{\Delta^i}$ has a degree resolution and $\reg\K[\Delta^i]=\dim\K[\Delta^i]$ for any simplicial complex $\Delta$ and for $-1\leq i<\dim\Delta$. The $i$-skeleton of a simplicial complex is the subcomplex consisting of all simplices of dimension at most $i$.

Next, we study the projective dimension of monomial ideals and connect it to the $i$-skeleton of the associated simplicial complex, as shown in Lemma \ref{pdim}. Finally, in Theorem \ref{Betti k-skeleton} and Corollaries \ref{Betti deg1} and \ref{Betti deg}, we present formula for calculating the graded Betti numbers of the $i$-skeleton from the original complex, and vice versa. These results show a strong relationship between the graded Betti numbers of a simplicial complex and its skeletons. In the Betti table of $I_{\Delta^i}$ for $-1\le i<\dim\Delta$, the only row that differs from the Betti table of $I_{\Delta}$ is the one corresponding to $\omega(I_{\Delta^i})$, with all rows below $\omega(I_{\Delta^i})$ being zero.

\subsection*{Acknowledgments}
The author expresses sincere thanks to Dr. Chwas Ahmed for his careful reading of this work and for his many valuable comments.

\section{Preliminaries}
Fix $n>0$ and let $X=\{x_1,x_2,\dots,x_n\}$ be a finite vertex set and $R=\K[x_i\mid x_i\in X]$ be a polynomial ring in $n$ variables over a field $\K$. A \emph{simplicial complex} $\Delta$ on the vertex set $X$, is a collection of subsets of $X$ that satisfies the following two conditions:
\begin{enumerate}
	\item For every $x_i\in X$, the singleton set $\{x_i\}$ is in $\Delta$.
	\item If $F$ is in $\Delta$, then any subset $F'$ of $F$ is also in $\Delta$.
\end{enumerate}

An element $F$ in $\Delta$ is called a \emph{face} of $\Delta$, and its dimension is defined as $|F|-1$, which we denote as $\dim F$. A face of dimension $i$ is referred to as an \emph{$i$-face}. The collection of all $0$-faces in $\Delta$ is the vertex set. 

A \emph{facet} is a maximal face (with respect to inclusion) of $\Delta$. We denote the set of all facets of $\Delta$ by $\mathcal{F}(\Delta)$, and we sometimes write $\Delta=\langle F\mid F\in\mathcal{F}(\Delta)\rangle$. Moreover, a subset $N$ of $X$ is a \emph{non-face} of $\Delta$ if it is not a face of $\Delta$. The set of all minimal non-faces (with respect to inclusion) is denoted as $\mathcal{N}(\Delta)$. 

The \emph{dimension} of $\Delta$ is defined as $\dim\Delta=d-1$, where $d=\max\{|F|\mid F\in\mathcal{F}(\Delta)\}$. A simplicial complex $\Delta$ is called \emph{pure} if all its facets have the same cardinality. 

A \emph{subcomplex} $\Gamma$ of $\Delta$ is a simplicial complex whose facets are faces of $\Delta$. The \emph{$i$-skeleton} $\Delta^i$ of $\Delta$ is the subcomplex containing faces of dimension $i$ or less.

The \emph{Stanley-Reisner ideal} $I_\Delta$ of a simplicial complex $\Delta$ is defined as:
$$I_\Delta=\left(\prod_{x_i\in N}x_i\mid N\in\mathcal{N}(\Delta)\right).$$
The \emph{Stanley-Reisner ring} $\K[\Delta]$ is the quotient ring $R/I_{\Delta}$.

The \emph{$f$-vector} of a simplicial complex $\Delta$ is a sequence $f(\Delta)=(f_{-1},f_0,\ldots,f_{d-1})$, where $f_i$ represents the number of $i$-faces of $\Delta$. The \emph{$h$-vector} of $\Delta$ is the $d$-tuple $h(\Delta)=(h_0,\ldots,h_d)$, where the integers $h_i$ can be determined by the following relation $$\sum_{i=0}^{d}f_{i-1}t^i(1-t)^{d-i}=\sum_{i=0}^{d}h_it^i.$$

A \emph{monomial} $u$ in $R$ is a product of the form $x_1^{a_1}x_2^{a_2}\dots x_n^{a_n}$, where $\mathbf{a}=(a_1,a_2,\dots,a_n)\in\mathbb{N}^n$ is a vector of non-negative integers. The \emph{degree} of the monomial $u$ is defined as $\deg(u)=a_1+a_2+\dots+a_n$. A \emph{monomial ideal} of $R$ is an ideal generated by monomials in $R$. 

Given any set of monomial generators for a monomial ideal $I$, it is possible to eliminate any monomials that are divisible by others in the set without changing the generating set of $I$. This process produces the unique minimal set of monomials that generates $I$. These monomials are called the \emph{minimal generators} of $I$, and the set of minimal generators of $I$ is denoted by $\mathcal{G}(I)$.


The \emph{initial degree} of $I$, denoted by $\alpha(I)$, is the minimum degree of the minimal generators of $I$, and the \emph{degree} of $I$, denoted by $\omega(I)$, is the maximum degree of the minimal generators of $I$. That is,  
$$\alpha(I)=\min\{\deg(u)\mid u\in\mathcal{G}(I)\}\quad\text{and}\quad\omega(I)=\max\{\deg(u)\mid u\in\mathcal{G}(I)\}.$$

Let $I\subseteq R$ be a monomial ideal, consider the following minimal graded free resolution of $R/I$:
$$0\longrightarrow\bigoplus_{j}R(-j)^{\beta_{p, j}}\longrightarrow\cdots\longrightarrow\bigoplus_{j}R(-j)^{\beta_{2, j}}\longrightarrow\bigoplus_{j}R(-j)^{\beta_{1, j}}\longrightarrow R\longrightarrow R/I\longrightarrow0,$$
where $R(-j)$ denotes the $R$-module obtained by shifting the degrees of each generator of $R$ by $j$. The integer $\beta_{i,j}(R/I):=\beta_{i,j}$ is called the \emph{$i$-th graded Betti number} of $R/I$ in degree $j$. The length $p$ of the resolution is called the \emph{projective dimension} of $R/I$ over $R$, and can be defined as:
$$\pdim R/I=\max\{i\mid\beta_{i,j}\neq 0\text{ for some }j\}.$$
The \emph{regularity} of $R/I$ over $R$, also known as \emph{Castelnuovo–Mumford regularity}, is:
$$\reg R/I=\max\{j-i\mid\beta_{i,j}\neq 0\}.$$



An important lemma that we will use later is Hochster's formula. This formula offers a crucial understanding of the graded Betti numbers of a Stanley-Reisner ring (see {\cite[Theorem 5.1]{Hochster1977}} or {\cite[Lemma 9]{Froberg2021}}). Denote by $\Delta_W$ the simplicial complex on the vertex set $W\subseteq\{x_1,\ldots,x_n\}$, where the faces of $\Delta_W$ are those $F\in\Delta$ such that $F\subseteq W$.

\begin{lemma}[\textbf{Hochster's formula}]\label{hochster}
	Let $W\subseteq\{x_1,\ldots,x_n\}$ and let $K_{R(W)}$ be the part of the Koszul complex $K_R$ that corresponds to the degree $\delta(W)=(d_1,\ldots,d_n)$, where $d_i=1$ if $x_i\in W$ and $d_i=0$ otherwise. Then 
	$$H_{i,\delta(W)}=H_i(K_{R(W)})\cong\tilde{H}_{|W|-i-1}(\Delta_W;\K).$$
\end{lemma}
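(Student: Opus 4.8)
The plan is to compute a single multigraded strand of the Koszul complex and recognise it as the reduced simplicial chain complex of $\Delta_W$. Since the Koszul differential is multihomogeneous, the Koszul homology splits as a direct sum over multidegrees, and the first asserted equality $H_{i,\delta(W)}=H_i(K_{R(W)})$ is exactly this decomposition restricted to the strand of multidegree $\delta(W)$; recall also that, because $K_R=K(x_1,\dots,x_n;\K[\Delta])$ is the Koszul resolution of $\K$ tensored with $\K[\Delta]$, this homology computes $\operatorname{Tor}_i^R(\K[\Delta],\K)_{\delta(W)}$. Thus the real content of the lemma is the second isomorphism, and everything reduces to analysing $K_{R(W)}$.

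First I would fix the multidegree $\mathbf{a}=\delta(W)$ and write down an explicit $\K$-basis for the strand $(K_R)_{\delta(W)}$. A basis element of $K_R$ in homological degree $i$ has the form $m\cdot e_S$ with $S\subseteq\{1,\dots,n\}$, $|S|=i$, and $m$ a monomial of $\K[\Delta]$; its multidegree is $\deg(m)+\delta(S)$. Requiring this to equal $\delta(W)$ forces $S\subseteq W$ and $m=\prod_{x_j\in W\setminus S}x_j$, and this element is nonzero in $\K[\Delta]$ exactly when $W\setminus S$ is a face of $\Delta$, i.e. $W\setminus S\in\Delta_W$. Hence the basis of $(K_{R(W)})_i$ is indexed by the faces $F=W\setminus S\in\Delta_W$ with $|F|=|W|-i$, that is, by the faces of dimension $|W|-i-1$.

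Next I would compute the Koszul differential on this basis. Applying $d(e_S)=\sum_{k\in S}(-1)^{\sigma(k,S)}x_k\,e_{S\setminus k}$ and using $x_k\cdot\prod_{x_j\in W\setminus S}x_j=\prod_{x_j\in (W\setminus S)\cup\{k\}}x_j$, the differential sends the generator attached to $F=W\setminus S$ to a signed sum of the generators attached to the faces $F\cup\{k\}$, each of dimension one higher, the surviving terms being exactly those with $F\cup\{k\}\in\Delta_W$. After the reindexing $i\mapsto p=|W|-i-1$ this is, up to the standard sign normalisation, precisely the reduced simplicial coboundary map of $\Delta_W$; equivalently, $K_{R(W)}$ is a relabelling of the augmented oriented chain complex of $\Delta_W$ read in the opposite direction via the duality $S\leftrightarrow W\setminus S$. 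The empty face $F=\emptyset$ (arising from $S=W$, in homological degree $i=|W|$) supplies the augmentation term, which is why reduced rather than ordinary homology appears.

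Finally, reading off homology in homological degree $i$ gives the cohomology of $\Delta_W$ in dimension $|W|-i-1$, and since we work over a field the reduced cohomology $\tilde H^{\,|W|-i-1}(\Delta_W;\K)$ is canonically isomorphic to $\tilde H_{|W|-i-1}(\Delta_W;\K)$ by universal coefficients, yielding the claim. The main obstacle is the bookkeeping in the middle step: one must pin down the sign convention $\sigma(k,S)$ and check that, under the bijection $S\leftrightarrow W\setminus S$, the Koszul signs match the incidence signs of the simplicial (co)boundary, so that the two differentials agree on the nose rather than merely up to an unspecified sign; handling the augmentation and the boundary cases $S=\emptyset$ and $S=W$ correctly is where the care is needed.
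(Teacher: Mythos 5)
Your sketch is correct: the multidegree decomposition of the Koszul homology, the identification of the $\delta(W)$-strand basis with the faces $F=W\setminus S$ of $\Delta_W$, the recognition of the Koszul differential as the (co)boundary map under $S\leftrightarrow W\setminus S$ with the empty face supplying the augmentation, and the final passage from $\tilde H^{\,|W|-i-1}$ to $\tilde H_{|W|-i-1}$ over a field are exactly the standard argument. The paper itself gives no proof of this lemma --- it is quoted as known, with citations to Hochster and to Fr\"oberg --- and your argument is essentially the proof found in those references, so there is nothing to reconcile beyond the sign bookkeeping you already flag as the only delicate point.
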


For any unfamiliar or unexplained terminologies, we refer the reader to \cite{BrunsHerzog1998,Eisenbud1995, Villarreal2015} and the reference therein.

\section{The graded Betti numbers of the $i$-skeletons of a simplicial complex}

A monomial ideal with a linear resolution requires all its minimal generators to have the same degree. To generalize this concept, we introduce the notion of a degree resolution, which applies to monomial ideals whose minimal generators may have different degrees. We then derive an equation that allows us to calculate the graded Betti numbers of the $i$-skeletons of a given simplicial complex $\Delta$, utilizing the graded Betti numbers of the original complex $\Delta$. We begin by providing the following definition.

\begin{definition}\label{degree resolution}
	A monomial ideal $I$ in the polynomial ring $R$ is said to have a \emph{degree resolution} if $\beta_{i,j}(I)=0$ for all $i\ge0$ and $j>i+\omega(I)$. Equivalently, the regularity of $I$ satisfies  
	$$\reg I=\max\{j\mid \beta_{0,j}\ne0\}.$$
	In other words, the regularity of $I$ is equal to its degree, i.e., $\reg I = \omega(I)$.
\end{definition}

For example, the ideal $I=(x_1x_2,x_2x_3x_4,x_3x_4x_5)$ has a degree resolution because both its regularity and degree are equal, that is, $\reg I=\omega(I)=3$. On the other hand, the ideal $J=(x_1x_2,x_3x_4)$ does not have a degree resolution since $\omega(J)=2$ while $\reg J=3$.

\begin{remark}
	The notion of degree resolution can be extended to any finitely generated graded $R$-module.
\end{remark}

The following result shows that every squarefree monomial ideal admits a truncation whose regularity equals that of the ideal itself.


\begin{proposition}\label{degree resolution and I_k}
	A squarefree monomial ideal $I$ has a degree resolution if and only if the truncation $I_k$ has a linear resolution, where $k=\omega(I)$.
\end{proposition}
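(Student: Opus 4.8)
The plan is to express both conditions as the vanishing of reduced simplicial homology through Hochster's formula (Lemma \ref{hochster}) and then to check that the two vanishing conditions obtained are literally the same. Write $I = I_\Delta$ and set $k = \omega(I)$. By definition, $I$ has a degree resolution precisely when $\beta_{i,j}(I) = 0$ for all $j > i + k$. Via the isomorphism $\beta_{i,j}(I) = \beta_{i+1,j}(R/I)$ and Hochster's formula, writing $\ell = j - i - 2$, this is equivalent to
$$\tilde{H}_\ell(\Delta_W;\K) = 0 \quad\text{for all } W \subseteq X \text{ and all } \ell \ge k-1.$$

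Next I would identify the complex $\Delta'$ with $I_{\Delta'} = I_k$. A squarefree monomial of $I$ of degree $> k$ is always divisible by a squarefree monomial of $I$ of degree $k$ (a non-face of size $> k$ contains a minimal non-face of size $\le k$, which can be enlarged inside it to size exactly $k$), so the minimal generators of $I_k$ are exactly the squarefree monomials $x_\sigma$ with $\sigma$ a non-face of $\Delta$ and $|\sigma| = k$. In particular $\omega(I_k) = k$, so $I_k$ has a linear resolution if and only if $\beta_{i,j}(I_k) = 0$ for all $j > i + k$; and the corresponding complex is
$$\Delta' = \Delta \cup \{F \subseteq X : |F| \le k-1\},$$
which contains every set of size $\le k-1$ and agrees with $\Delta$ on all sets of size $\ge k$. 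Exactly as above, $I_k$ has a linear resolution if and only if $\tilde{H}_\ell((\Delta')_W;\K) = 0$ for all $W$ and all $\ell \ge k-1$.

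Thus everything reduces to the Key Lemma: $\tilde{H}_\ell((\Delta')_W;\K) \cong \tilde{H}_\ell(\Delta_W;\K)$ for every $W$ and every $\ell \ge k-1$. For $\ell \ge k$ this is immediate, since the two complexes share all faces of dimension $\ge k-1$, so the chain groups and boundary maps in homological degrees $\ell-1, \ell, \ell+1$ coincide.

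The main obstacle is the borderline degree $\ell = k-1$, where the complexes genuinely differ: $(\Delta')_W$ contains all $(k-2)$-faces of $W$, whereas $\Delta_W$ contains only those lying in $\Delta$. Here I would show that the extra $(k-2)$-faces do not affect $\tilde{H}_{k-1}$. The groups $C_{k-1}$ and $C_k$ agree for the two complexes, so $\mathrm{im}\,\partial_k$ is unchanged; and because $\Delta$ is closed under passage to subsets, every $(k-2)$-subface of a $(k-1)$-face of $\Delta_W$ already lies in $\Delta_W$. Hence the boundary map $\partial_{k-1}$ takes the same values on the common group $C_{k-1}$ in either complex, so $\ker\partial_{k-1}$ is identical, and therefore $\tilde{H}_{k-1} = \ker\partial_{k-1}/\mathrm{im}\,\partial_k$ coincides. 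This establishes the Key Lemma and hence the proposition.
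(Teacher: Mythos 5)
Your proof is correct, but it takes a genuinely different route from the paper's. The paper's argument is short and external: it observes that a degree resolution means $\reg I=\omega(I)$, and then invokes \cite[Theorem 2.5]{AhmedFrobergNamiq2023}, which already describes how the Betti table of the truncation $I_k$ relates to that of $I$ (in particular, that $I_k$ has a linear resolution exactly when $k\ge\reg I$); combined with the standing inequality $\reg I\ge\omega(I)$, both implications fall out immediately. You instead give a self-contained topological proof: you identify the Stanley--Reisner complex of $I_k$ as $\Delta'=\Delta\cup\{F:|F|\le k-1\}$ (your observation that every non-face of size $>k$ contains one of size exactly $k$, because $k=\omega(I)$ bounds the minimal non-faces, is the step that makes this identification work), translate both ``degree resolution'' and ``linear resolution'' into vanishing of $\tilde H_\ell(\,\cdot\,;\K)$ for $\ell\ge k-1$ via Hochster's formula, and then check that $\Delta_W$ and $(\Delta')_W$ have the same homology in those degrees --- with the only delicate case being $\ell=k-1$, which you handle correctly by noting that $C_{k-1}$, $C_k$, and the restriction of $\partial_{k-1}$ to $C_{k-1}$ all agree, so kernel and image are unchanged even though $C_{k-2}$ differs. (The boundary cases where $\ell>|W|-2$, which do not correspond to actual Betti numbers, are harmless since such homology vanishes automatically.) What your approach buys is independence from the truncation machinery of \cite{AhmedFrobergNamiq2023} and an explicit combinatorial description of the complex of $I_k$; what the paper's approach buys is brevity and consistency with the truncation framework it uses throughout.
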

\begin{proof}
	Suppose first that $I$ has a degree resolution. By definition, this means that $\reg I=\omega(I)$. Let $k=\omega(I)$. We observe that the truncation $I_k$ contains only the generators of degree $k$. By \cite[Theorem 2.5]{AhmedFrobergNamiq2023}, the only nonzero row in the Betti table of $I_k$ corresponds to degree $k$. This shows that $I_k$ has a linear resolution.
	
	Conversely, assume that $I_{k}$ has a linear resolution for $k=\omega(I)$. Then $\reg I_k=k=\omega(I)$. Since $\reg I\ge\omega(I)$ always holds, and by \cite[Theorem 2.5]{AhmedFrobergNamiq2023} the linearity of $I_k$ implies $\reg I=k$. It follows that $\reg I=\omega(I)$. Therefore, $I$ has a degree resolution.
\end{proof}

\begin{remark}
	Proposition \ref{degree resolution and I_k} shows that the property of having a degree resolution can be fully studied via the truncation $I_k$, where $k=\omega(I)$. In particular, this implies that for any squarefree monomial ideal $I$, the study of its regularity can be reduced to that of the truncation $I_k$. Consequently, when working with squarefree monomial ideals generated in different degrees, it suffices to consider the corresponding truncation ideals generated in a single degree, since the regularity of the truncation coincides with that of the original ideal.
\end{remark}

\begin{corollary}
	A monomial ideal $I\subseteq R$ has a linear resolution if and only if $I$ has a degree resolution and all of its minimal generators have the same degree.
\end{corollary}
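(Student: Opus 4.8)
The plan is to prove the two implications separately, translating everything into conditions on the vanishing of the graded Betti numbers $\beta_{i,j}(I)$. Write $d$ for the common value $\alpha(I)=\omega(I)$ whenever the generators share a single degree. The key tool throughout is the standard fact that in a minimal graded free resolution the differentials have entries in the homogeneous maximal ideal $\mathfrak{m}=(x_1,\dots,x_n)$, so that the generating degrees strictly increase from one homological step to the next.

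For the forward direction, suppose $I$ has a linear resolution. By definition this forces all minimal generators to lie in a single degree $d$, so $\omega(I)=d$, and it gives $\beta_{i,j}(I)\ne0$ only when $j=i+d$. Then $\reg I=\max\{j-i\mid\beta_{i,j}\ne0\}=d=\omega(I)$, which is precisely the condition that $I$ has a degree resolution (Definition \ref{degree resolution}). Thus both asserted properties follow at once.

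For the converse, assume $I$ has a degree resolution and that all minimal generators have the same degree $d$; hence $\omega(I)=d$. The degree resolution hypothesis supplies the upper bound $\beta_{i,j}(I)=0$ for all $j>i+d$. To obtain a linear resolution I must complement this with the matching lower bound $\beta_{i,j}(I)=0$ for all $j<i+d$. This is where minimality of the resolution enters: since every generator of $I$ has degree $d$, we have $\beta_{0,j}(I)=0$ for $j\ne d$, and because each differential raises degrees by at least one, an induction on $i$ gives $\min\{j\mid\beta_{i,j}(I)\ne0\}\ge i+d$, i.e. $\beta_{i,j}(I)=0$ whenever $j<i+d$. Combining the two bounds yields the implication $\beta_{i,j}(I)\ne0\Rightarrow j=i+d$, which is exactly the statement that $I$ has a linear resolution.

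The argument is essentially bookkeeping once the correct bounds are in place, so I do not expect a serious obstacle. The one point requiring care is the lower bound in the converse: it is tempting to invoke Proposition \ref{degree resolution and I_k}, but that result is stated only for squarefree monomial ideals, whereas the present corollary concerns arbitrary monomial ideals. I would therefore avoid that shortcut and instead argue directly from minimality of the resolution, as above, so that the statement remains valid in full generality.
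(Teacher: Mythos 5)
Your proof is correct: both directions are handled properly, and the one nontrivial point in the converse (the lower bound $\beta_{i,j}(I)=0$ for $j<i+d$, obtained from minimality of the resolution rather than from Proposition \ref{degree resolution and I_k}, which is stated only for squarefree ideals) is exactly the right way to argue. The paper omits the proof entirely (the corollary is followed only by a \textsc{qed} mark), so your argument simply supplies the standard details the author left to the reader.
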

\qed

Next, we present a lemma regarding the relationship between the dimension and degree of a squarefree monomial ideal. This result establishes a connection between the dimension of $\K[\Delta]$, the degree of $I_{\Delta}$, and the condition under which $I_{\Delta}$ has a degree resolution.
\begin{lemma}\label{dimension degree}
	Let $\Delta$ be a simplicial complex. Then we have
	$$\dim\K[\Delta]\ge\omega(I_{\Delta})-1.$$
	The equality holds if and only if $I_{\Delta}$ has a degree resolution and $\reg\K[\Delta]=\dim\K[\Delta]$.
\end{lemma}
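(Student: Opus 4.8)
The plan is to establish the stated inequality first, and then obtain the equality criterion by squeezing $\reg\K[\Delta]$ between $\omega(I_\Delta)-1$ and $\dim\K[\Delta]$.

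For the inequality itself I would argue combinatorially. Recall that $\dim\K[\Delta]=\dim\Delta+1$ is the maximum cardinality of a facet of $\Delta$, while $\omega(I_\Delta)$ is the maximum cardinality of a minimal non-face, since the generator attached to $N\in\mathcal{N}(\Delta)$ has degree $|N|$. Pick a minimal non-face $N$ with $|N|=\omega(I_\Delta)$. By minimality of $N$, deleting any one vertex produces a face of $\Delta$ of cardinality $\omega(I_\Delta)-1$; hence $\Delta$ has a face of this size, and therefore $\dim\K[\Delta]\ge\omega(I_\Delta)-1$.

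The core of the proof is the chain of inequalities
$$\omega(I_\Delta)-1\ \le\ \reg\K[\Delta]\ \le\ \dim\K[\Delta].$$
For the lower bound, I would use that $\beta_{1,\omega(I_\Delta)}(\K[\Delta])=\beta_{0,\omega(I_\Delta)}(I_\Delta)\neq0$ records a minimal generator of maximal degree, so it contributes $\omega(I_\Delta)-1$ to the regularity; equivalently $\reg I_\Delta\ge\omega(I_\Delta)$ combined with the standard shift $\reg I_\Delta=\reg\K[\Delta]+1$. For the upper bound I would invoke Hochster's formula (Lemma \ref{hochster}): any nonzero $\beta_{i,j}(\K[\Delta])$ forces $\tilde{H}_{j-i-1}(\Delta_W;\K)\neq0$ for some $W$ with $|W|=j$. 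Setting $\ell=j-i-1$ and using that reduced simplicial homology vanishes above the dimension, we get $\ell\le\dim\Delta_W\le\dim\Delta$, whence $j-i=\ell+1\le\dim\Delta+1=\dim\K[\Delta]$; maximizing over all $(i,j)$ gives $\reg\K[\Delta]\le\dim\K[\Delta]$. I expect this upper bound to be the main obstacle, as it is the one step that genuinely uses the topology of the induced subcomplexes $\Delta_W$ rather than degree bookkeeping.

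Finally I would read off the equivalence. By Definition \ref{degree resolution}, $I_\Delta$ has a degree resolution exactly when $\reg I_\Delta=\omega(I_\Delta)$, i.e. when $\reg\K[\Delta]=\omega(I_\Delta)-1$. If $\dim\K[\Delta]=\omega(I_\Delta)-1$, then the displayed chain collapses to equalities, yielding simultaneously $\reg\K[\Delta]=\omega(I_\Delta)-1$ (a degree resolution) and $\reg\K[\Delta]=\dim\K[\Delta]$. Conversely, these two conditions together force $\dim\K[\Delta]=\omega(I_\Delta)-1$, which closes the characterization.
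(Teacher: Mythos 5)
Your proof is correct, and its overall shape matches the paper's: the inequality is obtained combinatorially from the fact that deleting a vertex from a largest minimal non-face yields a face of cardinality $\omega(I_\Delta)-1$, and the equality criterion comes from squeezing $\reg\K[\Delta]$ between $\omega(I_\Delta)-1$ and $\dim\K[\Delta]$. The one step where you genuinely diverge is the upper bound $\reg\K[\Delta]\le\dim\K[\Delta]$. The paper gets this by citing Hoa--Trung (Theorem 3.1 of \cite{HoaTrung1998}), in the form $\reg\K[\Delta]\le n-\hte I_\Delta=\dim\K[\Delta]$. You instead derive it from Hochster's formula (Lemma \ref{hochster}): a nonzero $\beta_{i,j}(\K[\Delta])$ forces $\tilde{H}_{j-i-1}(\Delta_W;\K)\ne0$ for some $W$ with $|W|=j$, and since reduced simplicial homology vanishes above the dimension of $\Delta_W$, one gets $j-i\le\dim\Delta+1$. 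This is a more self-contained route -- it uses only a lemma already stated in the paper rather than an external regularity bound -- and it makes transparent exactly where the topology of the induced subcomplexes enters; the paper's citation is shorter but imports a general theorem whose proof is essentially the same homological vanishing. Your justification of the lower bound via $\beta_{1,\omega(I_\Delta)}(\K[\Delta])\ne0$ is also slightly more explicit than the paper's bare assertion that $\reg\K[\Delta]\ge\omega(I_\Delta)-1$. The final read-off of the equivalence from Definition \ref{degree resolution} and the shift $\reg I_\Delta=\reg\K[\Delta]+1$ is the same in both arguments.
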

\begin{proof}
	Each monomial generator of the ideal $I_{\Delta}$ corresponds to a minimal non-face of the simplicial complex $\Delta$. The degree of $I_{\Delta}$ is determined by the largest minimal non-face of $\Delta$.
	
	A minimal non-face of $\Delta$ is a set of vertices that is not a face of $\Delta$, but all of its proper subsets are faces of $\Delta$. Consequently, the dimension of $\K[\Delta]$ satisfies $\dim\K[\Delta]=\dim\Delta+1\geq\omega(I_{\Delta})-1$.
	
	Now, assume that $\dim\K[\Delta]=\omega(I_{\Delta})-1$. Under this assumption and \cite[Theorem 3.1]{HoaTrung1998}, we have $\reg\K[\Delta]\le n-\hte I_{\Delta}=\omega(I_{\Delta})-1$. On the other hand, $\reg\K[\Delta]\ge\omega(I_\Delta)-1$. Thus, it follows that $\reg\K[\Delta]=\omega(I_{\Delta})-1$, which means that $I_{\Delta}$ has a degree resolution. Consequently, this implies $\reg\K[\Delta]=\dim\K[\Delta]$.
	
	Conversely, let $\omega(I_{\Delta})-1=\reg\K[\Delta]=\dim\K[\Delta]$. Then the result immediately follows.
\end{proof}

As a consequence of Lemma \ref{dimension degree}, we obtain the following corollary regarding the degree resolution of the $i$-skeleton. This corollary is sufficient to establish that the $i$-skeleton of any simplicial complex with $i<d-1$ has a degree resolution, and that the regularity is equal to the dimension.

\begin{corollary}\label{skeleton degree resolution}
	Let $\Delta$ be a simplicial complex. Then $I_{\Delta^i}$ has a degree resolution and $\reg\K[\Delta^i]=\dim\K[\Delta^i]$ for $-1\le i<\dim\Delta$.
\end{corollary}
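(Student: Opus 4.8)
The plan is to reduce the statement to Lemma \ref{dimension degree} by showing that the skeleton $\Delta^i$ achieves equality in the bound $\dim\K[\Delta^i]\ge\omega(I_{\Delta^i})-1$. Once the equality $\dim\K[\Delta^i]=\omega(I_{\Delta^i})-1$ is in hand, both conclusions, namely that $I_{\Delta^i}$ has a degree resolution and that $\reg\K[\Delta^i]=\dim\K[\Delta^i]$, follow immediately from the equality case of that lemma applied with $\Delta^i$ playing the role of $\Delta$. Thus the whole task is the combinatorial computation of the two quantities $\dim\K[\Delta^i]$ and $\omega(I_{\Delta^i})$.

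First I would compute the dimension. Since $i<\dim\Delta$, the complex $\Delta$ possesses a face of dimension $i+1$, and taking its subfaces shows that $\Delta^i$ has dimension exactly $i$. Hence $\dim\K[\Delta^i]=\dim\Delta^i+1=i+1$.

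Next I would identify the minimal non-faces of $\Delta^i$, since $\omega(I_{\Delta^i})$ equals the largest cardinality among them. A subset $N\subseteq X$ fails to be a face of $\Delta^i$ precisely when either $N\notin\Delta$ or $|N|\ge i+2$, and I would argue in two directions. On one hand, no minimal non-face can have cardinality $i+3$ or larger: any such $N$ contains a subset of cardinality $i+2$, which has dimension $i+1>i$ and is therefore already a non-face of $\Delta^i$, contradicting minimality. On the other hand, a minimal non-face of cardinality exactly $i+2$ does exist: choosing an $(i+1)$-dimensional face $F$ of $\Delta$ (available because $i<\dim\Delta$), every proper subset of $F$ is a face of $\Delta$ of dimension at most $i$, hence lies in $\Delta^i$, while $F$ itself has dimension $i+1$ and so is not in $\Delta^i$; thus $F$ is a minimal non-face of $\Delta^i$. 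Combining the two observations yields $\omega(I_{\Delta^i})=i+2$.

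Putting the pieces together gives $\dim\K[\Delta^i]=i+1=(i+2)-1=\omega(I_{\Delta^i})-1$, which is exactly the equality hypothesis of Lemma \ref{dimension degree} for $\Delta^i$, and the desired conclusions follow. The only genuinely substantive part is the bookkeeping of minimal non-faces in the third step, in particular the clean fact that passing to the $i$-skeleton promotes every $(i+1)$-face of $\Delta$ to a minimal non-face of $\Delta^i$ while creating no minimal non-face of larger size. As a sanity check I would also verify the boundary value $i=-1$, where the same argument specializes to $\Delta^{-1}=\{\emptyset\}$, $I_{\Delta^{-1}}=(x_1,\dots,x_n)$, $\omega(I_{\Delta^{-1}})=1$, and $\dim\K[\Delta^{-1}]=0$, so that the equality persists.
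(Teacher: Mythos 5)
Your proposal is correct and follows exactly the paper's route: establish $\dim\K[\Delta^i]=i+1$ and $\omega(I_{\Delta^i})=i+2$, then invoke the equality case of Lemma \ref{dimension degree}. You simply supply more detail than the paper does on why no minimal non-face of $\Delta^i$ can exceed cardinality $i+2$ and why one of that cardinality exists, which is a welcome elaboration but not a different argument.
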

\begin{proof}
	We have $\dim\K[\Delta^i]=i+1$ and $\omega(I_{\Delta^i})=i+2$ for $-1\le i<\dim\Delta$. It follows from Lemma \ref{dimension degree} that $I_{\Delta^i}$ has a degree resolution.
\end{proof}



Now, we present the following lemma that links the projective dimension of a simplicial complex to its skeletons. This lemma shows how the projective dimension varies when moving to a smaller skeleton.
\begin{lemma}\label{pdim}
	Let $\Delta$ be a simplicial complex. If $i\ge0$ is an integer such that $\Delta^{i}$ is pure and $\pdim\K[\Delta^i]=p$, then $\pdim\K[\Delta^{i-1}]=p+1$.
\end{lemma}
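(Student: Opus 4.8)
The plan is to convert the statement about projective dimension into one about depth and then compute depths combinatorially. Since $\K[\Delta^i]$ and $\K[\Delta^{i-1}]$ are nonzero finitely generated graded $R$-modules and $\dep R=n$, the Auslander--Buchsbaum formula gives $\pdim\K[\Delta^j]=n-\dep\K[\Delta^j]$ for $j=i,i-1$. Hence the desired identity $\pdim\K[\Delta^{i-1}]=\pdim\K[\Delta^i]+1$ is equivalent to the assertion that the depth drops by exactly one, \[\dep\K[\Delta^{i-1}]=\dep\K[\Delta^i]-1,\] and the entire problem reduces to understanding how $\dep\K[\Delta]$ behaves under passage to a lower skeleton.

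To compute these depths I would use Hochster's formula together with the fact that links and skeleta commute, $\mathrm{lk}_{\Delta^j}F=(\mathrm{lk}_\Delta F)^{\,j-|F|}$, and the elementary observation that truncating a complex $\Gamma$ to its $j$-skeleton leaves $\tilde H_\ell$ unchanged for $\ell<j$, replaces $\tilde H_j$ by the cycle space $Z_j(\Gamma)$, and annihilates $\tilde H_\ell$ for $\ell>j$. Writing the depth as the least homological degree in which the relevant link reduced homology group is nonzero, these inputs yield the skeletal depth formula \[\dep\K[\Delta^j]=\min\{\,j+1,\ \dep\K[\Delta]\,\}\qquad(0\le j\le\dim\Delta),\] where purity of $\Delta^i$ is used only to guarantee $\dim\K[\Delta^i]=i+1$, so that the top contribution at homological degree $i+1$ survives.

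Granting this formula, the conclusion is immediate: with $D=\dep\K[\Delta]$ the depth difference is $\min(i+1,D)-\min(i,D)$, which equals $1$ precisely when $i+1\le D$ and equals $0$ otherwise. Thus $\pdim\K[\Delta^{i-1}]=p+1$ holds exactly when $\dep\K[\Delta]\ge i+1$, i.e. when $\dep\K[\Delta^i]=i+1=\dim\K[\Delta^i]$, which is to say when $\Delta^i$ is Cohen--Macaulay.

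The step I expect to be the real obstacle is this last one, and it is where I would scrutinise the hypothesis most carefully: purity of $\Delta^i$ forces $\dim\K[\Delta^i]=i+1$ but does \emph{not} by itself force the depth to reach that value, so a priori the depth need not drop. For example, if $\Delta$ is the disjoint union of two $i$-simplices then $\Delta^i$ is pure while $\dep\K[\Delta^i]=\dep\K[\Delta^{i-1}]=1$, so the two projective dimensions coincide. I therefore anticipate that the argument needs purity strengthened to Cohen--Macaulayness of $\Delta^i$ (equivalently $i<\dep\K[\Delta]$); under that hypothesis the skeletal depth formula gives the claim, and one may alternatively invoke the classical fact that every skeleton of a Cohen--Macaulay complex is Cohen--Macaulay to conclude $\dep\K[\Delta^{i-1}]=i=(i+1)-1$ directly.
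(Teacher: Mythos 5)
Your reduction via Auslander--Buchsbaum and the skeletal depth formula $\dep\K[\Delta^{j}]=\min\{j+1,\dep\K[\Delta]\}$ is sound, and your diagnosis of the hypothesis is exactly right. The paper's own proof consists of citing that $\Delta^{i-1}$ is Cohen--Macaulay whenever $\Delta^{i}$ is, together with $\hte I_{\Delta^{i-1}}=\hte I_{\Delta^{i}}+1$, and then concluding $\pdim\K[\Delta^{i-1}]=\hte I_{\Delta^{i-1}}=\hte I_{\Delta^{i}}+1=p+1$. That chain of equalities uses $\pdim=\hte$ for both skeleta, which holds only in the Cohen--Macaulay case; purity of $\Delta^{i}$ never gets the argument started. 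So the gap you anticipated is present in the paper itself, and your counterexample is genuine: for $i=1$ and $\Delta=\lr{\{x_1,x_2\},\{x_3,x_4\}}$, two disjoint edges on four vertices, $\Delta^{1}=\Delta$ is pure with $\dep\K[\Delta^{1}]=1$ because $\Delta$ is disconnected, while $\Delta^{0}$ is four points with $\dep\K[\Delta^{0}]=1$; hence $\pdim\K[\Delta^{1}]=\pdim\K[\Delta^{0}]=3$, whereas the lemma would demand $\pdim\K[\Delta^{0}]=4$ --- impossible in any case, since a Stanley--Reisner ring on a nonempty vertex set has depth at least $1$.

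In substance your argument and the paper's come down to the same assertion, namely $\dep\K[\Delta^{i}]=i+1=\dim\K[\Delta^{i}]$, but yours is packaged so as to make visible exactly when the conclusion holds: $\pdim\K[\Delta^{i-1}]=p+1$ if and only if $\Delta^{i}$ is Cohen--Macaulay (equivalently $\dep\K[\Delta]\ge i+1$), and otherwise $\pdim\K[\Delta^{i-1}]=p$. The lemma should therefore be restated with ``$\Delta^{i}$ is Cohen--Macaulay'' in place of ``$\Delta^{i}$ is pure''; this stronger hypothesis is what the paper's proof actually uses, and it is satisfied at the one point where the lemma is later invoked (the example with $n=10$ and $\pdim\K[\Delta]=3$, so $\dep\K[\Delta]=7$ and the skeleta $\Delta^{6}$ and $\Delta^{5}$ to which the lemma is applied are Cohen--Macaulay).
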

\begin{proof}
	From \cite{AhmedFrobergNamiq2023}, we note that $\Delta^{i-1}$ is Cohen-Macaulay if $\Delta^i$ is Cohen-Macaulay. Furthermore, we have $\hte I_{\Delta^{i-1}}=\hte I_{\Delta^i}+1$. Therefore, the conclusion follows from these property.
\end{proof}

The following theorem provides a formula for calculating the graded Betti numbers of the skeletons of a simplicial complex. This formula based on the $f$-vector and the Betti numbers of the original complex. This result implies that in the Betti table of $\K[\Delta^i]$ for $-1 \leq i<\dim\Delta$, the only row that differs from the Betti table of $\K[\Delta]$ is the one corresponding to $\omega(I_{\Delta^i})-1$, with all rows below $\omega(I_{\Delta^i})-1$ being zero.

\begin{theorem}\label{Betti k-skeleton}
	Let $\Delta$ be a simplicial complex. For $-1\le k<\dim\Delta$, the graded Betti numbers of $\K[\Delta^k]$ can be determined by the following equation:
	$$(-1)^{i}\beta_{i,s}(\K[\Delta^k])=\sum_{r=0}^{s}(-1)^{s-r}\binom{n-r}{s-r}f_{r-1}(\Delta^k)-\sum_{i'>i}(-1)^{i'}\sum_{i'+j'=s}\beta_{i',s}(\K[\Delta]),$$
	where $s=i+j$, $j=\omega(I_{\Delta^k})-1$ and $s=\omega(I_{\Delta^k})-1,\omega(I_{\Delta^k}),\dots,n$.
\end{theorem}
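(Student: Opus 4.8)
The plan is to combine Hochster's formula (Lemma \ref{hochster}) with the Hilbert-series expansion of the Stanley--Reisner ring, using the degree resolution of $I_{\Delta^k}$ established in Corollary \ref{skeleton degree resolution} to isolate the single row in which the two Betti tables differ. The whole argument is organised around the elementary observation that for every $W\subseteq X$ one has $(\Delta^k)_W=(\Delta_W)^k$, the $k$-skeleton of the induced subcomplex.

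First I would compare the Betti numbers of $\K[\Delta^k]$ and $\K[\Delta]$ row by row. By Lemma \ref{hochster}, $\beta_{i,j}(\K[\Delta])=\sum_{|W|=j}\dim_{\K}\tilde{H}_{j-i-1}(\Delta_W;\K)$, so the entry in row $j-i$ and column $i$ records reduced homology in degree $m=j-i-1$. Passing to the $k$-skeleton only alters the chain complex of $\Delta_W$ in degrees exceeding $k$; hence $\tilde{H}_m((\Delta_W)^k;\K)=\tilde{H}_m(\Delta_W;\K)$ for $m\le k-1$ and $\tilde{H}_m((\Delta_W)^k;\K)=0$ for $m>k$. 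Translating through Hochster's formula, this shows $\beta_{i,j}(\K[\Delta^k])=\beta_{i,j}(\K[\Delta])$ whenever $j-i\le k$, while $\beta_{i,j}(\K[\Delta^k])=0$ whenever $j-i>k+1=\omega(I_{\Delta^k})-1$. The only row left to determine is $j-i=\omega(I_{\Delta^k})-1$, which is precisely the assertion recorded before the statement.

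Next I would extract the alternating sum. From the graded free resolution, the numerator of the Hilbert series of $\K[\Delta^k]$ is $(1-t)^nH_{\K[\Delta^k]}(t)=\sum_{i',j'}(-1)^{i'}\beta_{i',j'}(\K[\Delta^k])\,t^{j'}$, and expanding $H_{\K[\Delta^k]}$ through the $f$-vector gives $(1-t)^nH_{\K[\Delta^k]}(t)=\sum_{r}f_{r-1}(\Delta^k)\,t^r(1-t)^{n-r}$. Comparing the coefficient of $t^s$ on both sides yields
$$\sum_{i'}(-1)^{i'}\beta_{i',s}(\K[\Delta^k])=\sum_{r=0}^{s}(-1)^{s-r}\binom{n-r}{s-r}f_{r-1}(\Delta^k),$$
which is exactly the first sum in the stated formula. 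Because $I_{\Delta^k}$ has a degree resolution (Corollary \ref{skeleton degree resolution}), in internal degree $s$ only the columns $i'\ge s-(\omega(I_{\Delta^k})-1)=i$ contribute, so the left-hand alternating sum splits as $(-1)^i\beta_{i,s}(\K[\Delta^k])$ plus the terms with $i'>i$. Each of those higher-column terms sits in a row $j-i'<\omega(I_{\Delta^k})-1$, where by the first step $\beta_{i',s}(\K[\Delta^k])=\beta_{i',s}(\K[\Delta])$; moving them to the right-hand side produces the correction term and completes the formula.

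The step I expect to require the most care is the row-by-row comparison: one must track the shift between the Betti numbers of $I_{\Delta^k}$ and those of $\K[\Delta^k]=R/I_{\Delta^k}$, verify that the homology of $(\Delta_W)^k$ genuinely agrees with that of $\Delta_W$ below degree $k$ and vanishes above it, and confirm that the boundary degree $m=k$ is the unique place where the two complexes can disagree. Once this is pinned down, the Hilbert-series identity together with the degree-resolution vanishing make the remaining bookkeeping routine.
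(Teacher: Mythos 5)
Your proposal is correct and follows essentially the same route as the paper: Hochster's formula to show the Betti tables of $\K[\Delta^k]$ and $\K[\Delta]$ agree in all rows below $\omega(I_{\Delta^k})-1$, the $f$-vector expansion of the Hilbert series to get the alternating-sum identity in each total degree $s$, and the degree resolution from Corollary \ref{skeleton degree resolution} to isolate the single unknown entry in column $i=s-\omega(I_{\Delta^k})+1$. In fact your treatment of the Hochster step, via the identity $(\Delta^k)_W=(\Delta_W)^k$ and the explicit comparison of reduced homology in degrees $m\le k-1$, $m=k$, and $m>k$, supplies detail that the paper's proof only asserts.
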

\begin{proof}
	Let $f(\Delta)=(f_{-1},f_0,\dots,f_{d-1})$ be the $f$-vector of $\Delta$. The $f$-vector of the $k$-skeleton of $\Delta$, is then $(f_{-1},f_0,\dots,f_{k-1})$, where $-1\le k\le\dim\Delta$. Hence by Hochster’s formula, Lemma \ref{hochster}, the graded Betti numbers $\beta_{i,i+j}(\K[\Delta])$ remain unchanged for $j<\omega(I_{\Delta^k})-1$, i.e. $\beta_{i,i+j}(\K[\Delta^k])=\beta_{i,i+j}(\K[\Delta])$ for $j<\omega(I_{\Delta^k})-1$. The Hilbert series of the quotient ring $\K[\Delta]$ is given by
	$$H_{\K[\Delta]}(t)=\sum_{r=0}^{d} \frac{f_{r-1} t^{r}}{(1-t)^{r}}=\frac{\sum_i(-1)^i\sum_j\beta_{i,i+j}t^{i+j}}{(1-t)^n}.$$
	Moreover, we have that
	$$\displaystyle\sum_{r=0}^{d} \frac{f_{r-1} t^{r}}{(1-t)^{r}}\times(1-t)^n=\sum_{s=0}^{n}\sum_{r=0}^{s}(-1)^{s-r}\binom{n-r}{s-r}f_{r-1}t^s.$$
	As a result, from the Hilbert series of $\K[\Delta]$ we obtain
	$$\sum_{s=0}^{n}\sum_{r=0}^{s}(-1)^{s-r}\binom{n-r}{s-r}f_{r-1}t^s=\sum_{i}(-1)^i\sum_j\beta_{i,i+j}t^{i+j}$$
	By substituting $s=i+j$, we can derive
	$$\sum_{r=0}^{s}(-1)^{s-r}\binom{n-r}{s-r}f_{r-1}=\sum_{i}(-1)^i\sum_{i+j=s}\beta_{i,s},\quad s=0,1,\dots,n.$$
	From Corollary \ref{skeleton degree resolution}, it follows that $I_{\Delta^k}$ has a degree resolution for $-1\le k<\dim\Delta$. Therefore, we conclude that $j=\omega(I_{\Delta^k})-1$ and
	$$(-1)^{i}\beta_{i,s}(\K[\Delta^k])=\sum_{r=0}^{s}(-1)^{s-r}\binom{n-r}{s-r}f_{r-1}(\Delta^k)-\sum_{i'>i}(-1)^{i'}\sum_{i'+j'=s}\beta_{i',s}(\K[\Delta]),$$
	where $s=\omega(I_{\Delta^k})-1,\omega(I_{\Delta^k}),\dots,n$.
\end{proof}

\begin{remark}
	\begin{enumerate}
		\item For any Stanley-Reisner ring $\K[\Delta]$ with a linear resolution, the graded Betti numbers of $\K[\Delta]$ are determined by the formula:
	$$\beta_{i,i+j}(\K[\Delta])=\sum_{r=0}^{i+j}(-1)^{j-r}\binom{n-r}{i+j-r}f_{r-1}(\Delta), j=\omega(I_{\Delta})-1, i+j=\omega(I_{\Delta}),\omega(I_{\Delta})+1,\dots,n.$$
	This formula depends on the combinatorial properties of the simplicial complex $\Delta$, particularly the $f$-vector of $\Delta$ and the degree of $I_{\Delta}$. Consequently, the graded Betti numbers of $\Delta$ and all its skeletons can be directly computed using this formula along with the formula in Theorem \ref{Betti k-skeleton}.
	
	\item The key difference between this work and that of Ahmed et al. \cite{AhmedFrobergNamiq2023} lies in the procedure used to compute the graded Betti numbers. In their approach, to calculate the graded Betti numbers of $I_k$, all squarefree monomials of degree $k$ from $I$ are added to the set of minimal generators, and any monomial of degree $k-1$ in the minimal generators of $I$ is removed. In contrast, this work computes the graded Betti numbers of $I_{\Delta^k}$ by adding all faces of the simplicial complex associated with $I$ of cardinality $k+2$ to $I$. For example, consider the ideal $I=(ab,bcd,cdef)$. The associated simplicial complex of $I$ is $\Delta=\lr{acde,acdf,acef,bcef,adef,bdef}$. The ideal $I_3$ is $(abc,abd,abe,adf,bcd,cdef)$. Moreover, the ideal $I_{\Delta^{2}}=(ab,bcd,acde, acdf,acef,bcef,adef,bdef,cdef)$.
	\end{enumerate}
\end{remark}

To illustrate the results discussed above, we present the following detailed example.
\begin{example}
	Consider the ideal $I=(x_1x_2,x_2x_3x_4,x_5x_6x_7x_8,x_1x_3x_5x_7x_9x_{10})$ of the polynomial ring $R=\K[x_1,\dots,x_{10}]$. The $f$-vector of the simplicial complex $\Delta$ associated with $I$ is $f(\Delta)=(1,10,44,111,175,175,105,31,2)$. Using Macaulay2 \cite{M2}, we can compute the Betti table of $\K[\Delta]$, which is given by:
	\begin{center}
		\begin{tabular}{c|cccc} 
			& 0 & 1 & 2 & 3 \\
			\hline 0 & 1 & 0 & 0 & 0 \\
			1 & 0 & 1 & 0 & 0 \\
			2 & 0 & 1 & 1 & 0 \\
			3 & 0 & 1& 0 & 0 \\
			4 & 0 & 0 & 1 & 0 \\
			5 & 0 & 1 & 2 & 1\\
			6 & 0 & 0 & 1 & 1
		\end{tabular}
	\end{center}
	Next, we compute the graded Betti numbers of $\K[\Delta^4]$, where $\Delta^4$ is the 4-skeleton of $\Delta$. The $f$-vector of $\Delta^4$ is $f(\Delta^4)=(1,10,44,111,175,175)$.
	According to Corollary \ref{skeleton degree resolution}, we have $j=\omega(I_{\Delta^4})-1=\dim\K[\Delta^4]=5$. Since $\pdim\K[\Delta]=3=\hte I_{\Delta^6}$, by Lemma \ref{pdim}, we have $\pdim\K[\Delta^4]=5$. Now, using Theorem \ref{Betti k-skeleton}, we can compute the graded Betti numbers of $\K[\Delta^4]$ with the formula:
	$$(-1)^{i}\beta_{i,s}(\K[\Delta^k])=\sum_{r=0}^{s}(-1)^{s-r}\binom{n-r}{s-r}f_{r-1}(\Delta^k)-\sum_{i'>i}(-1)^{i'}\sum_{i'+j'=s}\beta_{i',s}(\K[\Delta]),$$
	where $i=0,\dots,5$, and $s=i+j$. We now compute each graded Betti number of $\K[\Delta^4]$ step by step:
	\begin{itemize}
		\item For $\beta_{1,6}(\K[\Delta^4])$:
		$$(-1)^1 \beta_{1,6}(\K[\Delta^4])=\sum_{r=0}^{6}(-1)^{6-r} \binom{10-r}{6-r}f_{r-1}(\Delta^4)-\beta_{2,6}(\K[\Delta]),$$
		which simplifies to:
		$$(-1)^6\binom{10}{6}\times1+(-1)^5\binom{9}{5}\times 10+\dots+(-1)^1\binom{5}{1}\times175-1=-106.$$
		Thus, $\beta_{1,6}(\K[\Delta^4])=106$.
		
		\item For $\beta_{2,7}(\K[\Delta^4])$:
		$$(-1)^2\beta_{2,7}(\K[\Delta^4])=\sum_{r=0}^{7}(-1)^{7-r}\binom{10-r}{7-r}f_{r-1}(\Delta^4),$$
		which simplifies to:
		$$(-1)^7\binom{10}{7}\times1+(-1)^6\binom{9}{6}\times10+\dots+(-1)^2\binom{5}{2}\times175=391.$$
		Thus, $\beta_{2,7}(\K[\Delta^4])=391$.
		
		\item For $\beta_{3,8}(\K[\Delta^4])$:
		$$(-1)^3\beta_{3,8}(\K[\Delta^4])=\sum_{r=0}^{8}(-1)^{8-r}\binom{10-r}{8-r}f_{r-1}(\Delta^4),$$
		which simplifies to:
		$$(-1)^8\binom{10}{8}\times1+(-1)^7\binom{9}{7}\times10+\dots+(-1)^3\binom{5}{3}\times175= -539.$$
		Thus, $\beta_{3,8}(\K[\Delta^4])=539$.
		
		\item For $\beta_{4,9}(\K[\Delta^4])$:
		$$(-1)^4\beta_{4,9}(\K[\Delta^4])=\sum_{r=0}^{9}(-1)^{9-r}\binom{10-r}{9-r} f_{r-1}(\Delta^4),$$
		which simplifies to:
		$$(-1)^9\binom{10}{9}\times1+(-1)^8\binom{9}{8}\times10+\dots+(-1)^4\binom{5}{4}\times175=330.$$
		Thus, $\beta_{4,9}(\K[\Delta^4])=330$.
		
		\item For $\beta_{5,10}(\K[\Delta^4])$:
		$$(-1)^5\beta_{5,10}(\K[\Delta^4])=\sum_{r=0}^{10}(-1)^{10-r}\binom{10-r}{10-r}f_{r-1}(\Delta^4),$$
		which simplifies to:
		$$(-1)^{10}\binom{10}{10}\times1+(-1)^9\binom{9}{9}\times10+\dots+(-1)^5\binom{5}{5}\times175=-76.$$
		Thus, $\beta_{5,10}(\K[\Delta^4])=76$.
	\end{itemize}
	Therefore, the Betti table of $\K[\Delta^4]$ is:
	$$\begin{array}{c|cccccc}
		& 0 & 1 & 2 & 3 & 4 & 5 \\
		\hline
		0 & 1 & 0 & 0 & 0 & 0 & 0 \\
		1 & 0 & 1 & 0 & 0 & 0 & 0 \\
		2 & 0 & 1 & 1 & 0 & 0 & 0 \\
		3 & 0 & 1 & 0 & 0 & 0 & 0 \\
		4 & 0 & 0 & 1 & 0 & 0 & 0 \\
		5 & 0 & 106 & 391 & 539 & 330 & 76
	\end{array}$$
\end{example}

In the next two corollaries, we present a way to compute the graded Betti numbers of a simplicial complex by using the graded Betti numbers of one of its skeleton. These corollaries are direct consequences of Theorem \ref{Betti k-skeleton}.

\begin{corollary}\label{Betti deg1}
	Let $\Delta$ be a simplicial complex. Then $\reg I_{\Delta}<\omega(I_{\Delta^i})$ for some $-1\le i<\dim\Delta$ if and only if the graded Betti numbers of $\K[\Delta]$ can be directly determined from the graded Betti numbers of $\K[\Delta^i]$.
\end{corollary}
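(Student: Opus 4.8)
The plan is to read both implications off the structure of the Betti table of $\K[\Delta^i]$ that is already recorded in Theorem \ref{Betti k-skeleton} and Corollary \ref{skeleton degree resolution}, after first rephrasing the regularity hypothesis as a statement about which rows of the Betti table of $\K[\Delta]$ are nonzero. Since $\beta_{p,q}(I_\Delta)=\beta_{p+1,q}(\K[\Delta])$, we have $\reg I_\Delta=\reg\K[\Delta]+1$, so I would first observe that the hypothesis $\reg I_\Delta<\omega(I_{\Delta^i})$ is equivalent to $\reg\K[\Delta]<\omega(I_{\Delta^i})-1$. Writing $\ell=\omega(I_{\Delta^i})-1$, which by Corollary \ref{skeleton degree resolution} equals $\dim\K[\Delta^i]=i+1=\reg\K[\Delta^i]$, the hypothesis says exactly that every nonzero $\beta_{p,p+q}(\K[\Delta])$ has $q<\ell$; that is, the whole Betti table of $\K[\Delta]$ sits strictly above row $\ell$. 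Because the equivalence I am after holds for each fixed $i$, proving it for a fixed $i$ also settles the existentially quantified statement on both sides.

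Next I would isolate the two facts proved inside Theorem \ref{Betti k-skeleton}: first, Hochster's formula (Lemma \ref{hochster}) together with the fact that $\Delta$ and $\Delta^i$ have the same faces of dimension at most $i$ gives $\beta_{p,p+q}(\K[\Delta^i])=\beta_{p,p+q}(\K[\Delta])$ for every $q<\ell$; second, by Corollary \ref{skeleton degree resolution} the ideal $I_{\Delta^i}$ has a degree resolution, so the Betti table of $\K[\Delta^i]$ is identically zero below row $\ell$ and its only row that can differ from that of $\K[\Delta]$ is row $\ell$ itself. In short, rows $0,\dots,\ell-1$ of the two tables coincide, row $\ell$ of $\K[\Delta^i]$ is the one altered by passing to the skeleton, and $\K[\Delta^i]$ has nothing below $\ell$.

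The forward implication then follows immediately: if $\reg\K[\Delta]<\ell$, every nonzero entry of $\K[\Delta]$ lies in rows $0,\dots,\ell-1$ and, by the first fact, equals the corresponding entry of $\K[\Delta^i]$; hence the full table of $\K[\Delta]$ is obtained from that of $\K[\Delta^i]$ by retaining rows $0,\dots,\ell-1$ and deleting the bottom row, with no extra input required. For the converse I would argue by contraposition: if $\reg I_\Delta\ge\omega(I_{\Delta^i})$, i.e.\ $\reg\K[\Delta]\ge\ell$, then $\K[\Delta]$ has a nonzero Betti number in some row $q\ge\ell$; but $\K[\Delta^i]$ vanishes in all rows below $\ell$, while its single surviving row $\ell$ is the modified one, whose entries are given by the formula of Theorem \ref{Betti k-skeleton} in terms of the $f$-vector of $\Delta^i$ and the higher Betti numbers of $\K[\Delta]$. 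Consequently the entries of $\K[\Delta]$ in rows $\ge\ell$ are no longer visible in the table of $\K[\Delta^i]$, and cannot be recovered from it alone.

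The step I expect to be the main obstacle is making the phrase \emph{directly determined} precise and, in particular, treating the borderline case $\reg\K[\Delta]=\ell$, where the bottom nonzero row of $\K[\Delta]$ shares its index $\ell$ with the modified row of $\K[\Delta^i]$. Here one must verify that row $\ell$ of $\K[\Delta^i]$ genuinely differs from row $\ell$ of $\K[\Delta]$: because $i<\dim\Delta$, passing to the $i$-skeleton strictly enlarges the set of minimal non-faces (each $(i+2)$-subset all of whose proper subsets are faces of $\Delta$ becomes a minimal non-face, and at least one such set exists since $\Delta$ has an $(i+1)$-face), so $\beta_{1,i+2}(\K[\Delta^i])>\beta_{1,i+2}(\K[\Delta])$ and the formula of Theorem \ref{Betti k-skeleton} introduces a correction in row $\ell$ that is absent from $\K[\Delta]$. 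Checking that this correction always prevents the original entries from being read off directly is the delicate point, and it is cleanest to argue straight from the formula rather than through a separate combinatorial comparison.
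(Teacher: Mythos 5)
Your proposal is correct and follows essentially the same route as the paper: both reduce the statement to the two facts established in Theorem \ref{Betti k-skeleton}, namely that rows $0,\dots,\omega(I_{\Delta^i})-2$ of the Betti tables of $\K[\Delta]$ and $\K[\Delta^i]$ coincide (via Hochster's formula) while the skeleton's table is zero below row $\omega(I_{\Delta^i})-1$, so the hypothesis $\reg I_\Delta<\omega(I_{\Delta^i})$ is exactly the condition that no nonzero entry of $\K[\Delta]$ is lost or altered. Your write-up is in fact more careful than the paper's one-line argument, particularly in handling the borderline case $\reg\K[\Delta]=\omega(I_{\Delta^i})-1$ (which the paper defers to Corollary \ref{Betti deg}).
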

\begin{proof}
	Let $\reg I_{\Delta}<\omega(I_{\Delta^i})$ for some $-1\leq i< \dim\Delta$. By Theorem \ref{Betti k-skeleton}, this inequality holds if and only if none of the graded Betti numbers of $\K[\Delta^i]$ are changed. Consequently, $\reg I_{\Delta}<\omega(I_{\Delta^i})$ for some $-1\leq i<\dim\Delta$ if and only if the graded Betti numbers of $\K[\Delta]$ can be determined directly from the graded Betti numbers of $\K[\Delta^i]$.
\end{proof}

For instance, in the previous example, we cannot determine the graded Betti numbers of $\K[\Delta]$ form the Betti table of $\K[\Delta^4]$ because $\reg I_{\Delta}=7>\omega(I_{\Delta^4})=6$.

\begin{corollary}\label{Betti deg}
	Let $\Delta$ be a simplicial complex. Then $\reg I_{\Delta}=\omega(I_{\Delta^i})$ for some $-1\le i<\dim\Delta$ if and only if the graded Betti numbers of $\K[\Delta]$ can be determined from the graded Betti numbers of $\K[\Delta^i]$ and the $h$-vector of $\Delta$.
\end{corollary}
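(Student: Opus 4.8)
The plan is to treat Corollary~\ref{Betti deg} as the boundary analogue of Corollary~\ref{Betti deg1}, the case in which the unique row where the Betti table of $\K[\Delta^i]$ differs from that of $\K[\Delta]$ coincides exactly with the bottom nonzero row of $\K[\Delta]$. The structural input is already in place: by Corollary~\ref{skeleton degree resolution} we have $\omega(I_{\Delta^i})=i+2$, and from the discussion preceding Theorem~\ref{Betti k-skeleton} (via Hochster's formula, Lemma~\ref{hochster}) one has $\beta_{a,a+j}(\K[\Delta^i])=\beta_{a,a+j}(\K[\Delta])$ for every $j<\omega(I_{\Delta^i})-1=i+1$, while $\beta_{a,a+j}(\K[\Delta^i])=0$ for $j>i+1$. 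Thus every row strictly above the $(i+1)$-st is read off directly from the skeleton, and the content of the corollary is about recovering the single row $j=i+1$. I read ``determined from $\K[\Delta^i]$ and the $h$-vector'' in the sense complementary to Corollary~\ref{Betti deg1}, namely that the $h$-vector is genuinely required for the reconstruction.

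For the forward implication I would assume $\reg I_\Delta=\omega(I_{\Delta^i})$, equivalently $\reg\K[\Delta]=\omega(I_{\Delta^i})-1=i+1$. Then the nonzero entries of the Betti table of $\K[\Delta]$ all lie in rows $j\le i+1$: rows $j<i+1$ agree with $\K[\Delta^i]$ and are known, rows $j>i+1$ vanish, and the only unknowns are the bottom-row numbers $\beta_{a,a+(i+1)}(\K[\Delta])$. To recover them I would invoke the Hilbert-series identity established in the proof of Theorem~\ref{Betti k-skeleton},
$$\sum_{a}(-1)^{a}\beta_{a,s}(\K[\Delta])=\sum_{r=0}^{s}(-1)^{s-r}\binom{n-r}{s-r}f_{r-1}(\Delta),\qquad s=0,1,\dots,n.$$
The right-hand side equals the coefficient of $t^{s}$ in $(1-t)^{\,n-d}\sum_{\ell}h_\ell t^{\ell}$, where $d=\dim\Delta+1$ is read off from the length of the $h$-vector; hence it is computable from the $h$-vector of $\Delta$ alone (equivalently, the $h$-vector supplies the high-dimensional face numbers $f_{i+1}(\Delta),\dots,f_{d-1}(\Delta)$ that the skeleton lacks). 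Fixing $s$, the left-hand side contains exactly one unknown term, at $a=s-(i+1)$: all terms with $a>s-(i+1)$ sit in rows $<i+1$ and are known from the skeleton, and all terms with $a<s-(i+1)$ sit in rows $>i+1$ and vanish. Isolating that term determines $\beta_{s-(i+1),s}(\K[\Delta])$, and ranging over $s$ reconstructs the whole table.

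For the converse I would argue by trichotomy on $\reg I_\Delta$ versus $\omega(I_{\Delta^i})$. If $\reg I_\Delta<\omega(I_{\Delta^i})$, then by Corollary~\ref{Betti deg1} the table is already determined by $\K[\Delta^i]$ alone, so the genuine need for the $h$-vector excludes this case. If $\reg I_\Delta>\omega(I_{\Delta^i})$, then $\K[\Delta]$ has nonzero entries in the row $j=i+1$ and in some row $j>i+1$, on both of which the skeleton carries no information (its entries there vanish); since the displayed identity furnishes, for each total degree $s$, only the single alternating sum $\sum_a(-1)^a\beta_{a,s}(\K[\Delta])$, an anti-diagonal meeting two or more of these undetermined rows is underconstrained. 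I would make this precise by exhibiting two distinct complexes (or two numerically admissible Betti tables) that share the same $i$-skeleton and the same $h$-vector but differ along such a doubly occupied anti-diagonal, proving non-determinacy. Ruling out both strict inequalities then yields $\reg I_\Delta=\omega(I_{\Delta^i})$. The forward direction and the bookkeeping of known rows are routine given Theorem~\ref{Betti k-skeleton} and Corollary~\ref{skeleton degree resolution}; the main obstacle is precisely this last step, producing an explicit witness to non-determinacy in the case $\reg I_\Delta>\omega(I_{\Delta^i})$.
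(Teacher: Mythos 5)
Your forward direction is essentially the paper's argument, carried out in more detail: the paper's proof is a short appeal to Theorem \ref{Betti k-skeleton}, asserting that when $\reg I_{\Delta}=\omega(I_{\Delta^i})$ the only row of the Betti table that changes is $j=\reg\K[\Delta]$, and that this row is recoverable; your observation that each anti-diagonal sum $\sum_a(-1)^a\beta_{a,s}(\K[\Delta])$ then contains exactly one unknown, computable from the $h$-vector via the Hilbert series, is precisely the mechanism the paper uses implicitly (and demonstrates in the example following the corollary, where $\beta_{1,7},\beta_{2,8},\beta_{3,9},\beta_{4,10}$ are extracted one at a time from $(\sum_\ell h_\ell t^\ell)(1-t)^{n-d}$). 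So on that side you have, if anything, supplied more of the argument than the paper does.

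The converse is where you and the paper diverge in rigor rather than in method. You correctly identify that to prove the ``only if'' one must show non-determinacy when $\reg I_{\Delta}>\omega(I_{\Delta^i})$, i.e.\ exhibit two complexes with the same $i$-skeleton and the same $h$-vector but different Betti tables (the anti-diagonal sums underdetermine two or more unknown rows), and you flag that you have not produced such a witness. Be aware that the paper's own proof does not produce one either: it simply restates the biconditional as a consequence of Theorem \ref{Betti k-skeleton}, reading ``can be determined'' as ``is determined by this particular formula,'' under which the equivalence is little more than the observation that exactly one row is unknown iff $\reg I_\Delta=\omega(I_{\Delta^i})$ (the case $\reg I_\Delta<\omega(I_{\Delta^i})$ being covered by Corollary \ref{Betti deg1}, where the $h$-vector is not needed at all). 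Under your stricter information-theoretic reading the converse genuinely requires the witness you describe, and that step remains open in your write-up; under the paper's weaker reading your trichotomy already suffices. Either way, your proposal contains everything the paper's proof contains, and the one step you leave unfinished is one the paper does not attempt.
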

\begin{proof}
	Let $\reg I_{\Delta}=\omega(I_{\Delta^i})$ for a fixed $-1\le i<\dim\Delta$. According to Theorem \ref{Betti k-skeleton}, this equality holds if and only if none of the graded Betti numbers of $\K[\Delta^i]$ is changed for all $j<\reg\K[\Delta]$, and the only row that changes corresponds to $j=\reg\K[\Delta]$. Consequently, $\reg I_{\Delta}=\omega(I_{\Delta^i})$ if and only if the graded Betti numbers of $\K[\Delta]$ can be entirely determined from the graded Betti numbers of $\K[\Delta^i]$ and the $h$-vector of $\Delta$.
\end{proof}

\begin{example}
	As in the previous example, let  $I=(x_1x_2,x_2x_3x_4,x_5x_6x_7x_8,x_1x_3x_5x_7x_9x_{10})$ be an ideal of the polynomial ring $R=\K[x_1,\dots,x_{10}]$. The $h$-vector of the simplicial complex $\Delta$ associated with $I$ is given by $h(\Delta)=(1,2,2,1,0,-1,-2,-1,0)$. Using Macaulay2, we can compute the Betti table of $\K[\Delta^5]$, which is as follows:
	\begin{center}
		\begin{tabular}{c|ccccc} 
			& 0 & 1 & 2 & 3 & 4\\
			\hline 0 & 1 & 0 & 0 & 0 & 0\\
			1 & 0 & 1 & 0 & 0 & 0\\
			2 & 0 & 1 & 1 & 0 & 0\\
			3 & 0 & 1& 0 & 0 & 0\\
			4 & 0 & 0 & 1 & 0 & 0\\
			5 & 0 & 1 & 2 & 1 & 0\\
			6 & 0 & 31 & 92 & 90 & 29
		\end{tabular}
	\end{center}
	By Corollary \ref{Betti deg}, the graded Betti numbers of $\K[\Delta]$ can be derived from the graded Betti numbers of $\K[\Delta^5]$, since $\reg I_{\Delta}=7=\omega(I_{\Delta^5})$. Thus, we have $$\left(\sum_{i=0}^{8}h_it^i\right)(1-t)^2=\sum_{i=0}^{4}(-1)^i\sum_{j=0}^{6}\beta_{i,i+j}(\K[\Delta])t^{i+j}$$
	Therefore, $(1-t^2-t^3+2t^7-t^9)=\sum_{i=0}^{4}(-1)^i\sum_{j=0}^{6}\beta_{i,i+j}(\K[\Delta])t^{i+j}$. From this, we find that $-\beta_{1,7}+\beta_{2,7}=2$, implying that  $\beta_{1,7}=0$. Additionally, since $\beta_{2,8}-\beta_{3,8}=0$, we get $\beta_{2,8}=1$. Moreover, $-\beta_{3,9}+\beta_{4,9}=-1$, so $\beta_{3,9}=1$. Finally, $\beta_{4,10}=0$. Thus, the Betti table of $\K[\Delta]$ is as follows:
	\begin{center}
		\begin{tabular}{c|cccc} 
			& 0 & 1 & 2 & 3 \\
			\hline 0 & 1 & 0 & 0 & 0 \\
			1 & 0 & 1 & 0 & 0 \\
			2 & 0 & 1 & 1 & 0 \\
			3 & 0 & 1& 0 & 0 \\
			4 & 0 & 0 & 1 & 0 \\
			5 & 0 & 1 & 2 & 1\\
			6 & 0 & 0 & 1 & 1
		\end{tabular}
	\end{center}
\end{example}

In conclusion, for all integers $r\ge2$, $s>\alpha(I)$, and $-1\leq t<\dim\Delta$, the Betti table of $I^r$ cannot generally be determined from the Betti table of $I$. However, the Betti tables of $I_s$ and $I_{\Delta^t}$ can be obtained from the Betti table of $I$. In particular, if $I$ has a degree resolution, $I^r$ does not necessarily have a degree resolution, but $I_s$ does have a degree resolution. Furthermore, $I_{\Delta^t}$ always has a degree resolution, regardless of whether $I$ has a degree resolution.



\textbf{Data availability.} Author can confirm that all relevant data are included in the article.

\section{Statements and Declarations}
There is no conflicts of interest to disclose.

\bibliographystyle{plain}
\bibliography{References}

\end{document}